\definecolor{webgreen}{rgb}{0,.5,0}
\definecolor{webbrown}{rgb}{.6,0,0}
\theoremstyle{plain}
\newtheorem{theorem}{Theorem}
\newtheorem{proposition}[theorem]{Proposition}
\theoremstyle{definition}
\theoremstyle{remark}
\begin{document}

\title{From Unequal Chance to a Coin Game Dance: Variants of Penney's Game}
\author{
Isha Agarwal \and
Matvey Borodin \and
Aidan Duncan \and
Kaylee Ji \and
Shane Lee \and
Boyan Litchev \and
Anshul Rastogi \and
Garima Rastogi \and
Andrew Zhao \\
PRIMES STEP\\
\\
Tanya Khovanova\\
MIT
}

\maketitle

\begin{abstract}
We introduce and analyze several variations of Penney's game aimed to find a more equitable game.
\end{abstract}

\section{Penney's game}

Alice and Bob, two firm and fair friends with spare change and spare time, have a fair coin to flip. Alice selects a string of flip outcomes (heads or tails) of length $n$, after which Bob chooses his very own string of flip outcomes, also of length $n$. They then begin by tossing the coin. Whoever's string appears first in the sequence of heads and tails that they flip is the victor.

This game that Alice and Bob have ventured out to play is known as ``Penney's game'' or ``paradoxical pennies.'' We denote flip outcomes with H for heads and T for tails.

Suppose Alice and Bob play the game for $n=3$, where Alice selects the string HHH and Bob selects the string THH. They then toss the coin several times and yield the following sequence of flips:
\[\text{H H T T H }\textbf{T H H}.\]
We see that Bob's selected string, emphasized in the sequence of flip outcomes above, appears before Alice's chosen string HHH does. As a result, he is the winner of this round.
	
As Alice and Bob soon discover, Penney's game is in fact far more intriguing than they ever anticipated. This is due to how deceptively counterintuitive the nature of the game truly is. The dazzlingly daring duo decide to play several rounds with these same strings. They find that Bob accumulates a rather high amount of wins in comparison to Alice.

Suspecting something, they decide to peer at Penney's game a little closer. Alice correctly commences with the claim that each string of length 3 comprised of Hs and Ts has the same probability of $\frac{1}{2}\times\frac{1}{2}\times\frac{1}{2}=\frac{1}{8}$ to appear in three consecutive tosses. Alice and Bob then incorrectly assume that the game must be fair as a result. However, this assumption contradicts what was occurring --- even with a fair coin, Bob was triumphant in the great majority of rounds.

Befuddled, Alice and Bob approach us for aid as they lament about their predicament, a hopeful gleam scintillating in their eyes. We propose to them that perhaps they should approach their fairness assumption by considering which one of HH or HT is expected to show up in fewer flips.

Concurring with a nod, Alice correctly starts, ``In both cases, we need to wait for the first H. After that, an unfavorable flip for the former string, HH, means restarting entirely. However, while an unfavorable flip for the latter string, HT, means it fails to progress, it doesn't need to restart!''
	
``So, the latter string, HT, has a shorter expected wait time!'' Bob chimes in a conclusion. ``This certainly shows that our initial assumption that the game was fair is not substantiated. Something is indubitably afoot! Perhaps it has to do with expected wait times.''
	
``Aye,'' Alice amiably agrees. ``Is there a way to find the exact expected wait time in terms of flips?''
	
We tell them that, as a matter of fact, there is. We cover this further along in the paper, in Section~\ref{sec:sometheory}. For the time being, we provide them with the following data for reference: the expected wait time for HT is 4, while the wait time for HH is 6. For the string of length 3, the expected wait time for HHH and TTT is 16 flips, the expected wait time for HTH and THT is 14 flips, and the expected wait time for the four remaining strings --- HHT, HTT, THH, and TTH --- is 8 flips.

Now illuminated by this enrapturing revelation, the two companions present to us a second erroneous argument: ``If a string has a shorter wait time,'' Bob incorrectly states, ``it must bear a greater probability of winning since it is more likely to appear first. If two strings have the same wait time, then the probabilities of winning are the same.''
	
However, we inform them that this argument is incorrect as well. We explain to them, ``Suppose that you, Alice, select the string HHH and you, Bob, choose HHT as your string. As we previously mentioned, Alice's string has a much longer expected wait time of 16 flips while Bob's has a shorter expected wait time of 8 flips.''

``Right,'' Alice and Bob eagerly nod along.
	
``Let's look at how these strings would play out in the actual game. Both of you wait for two Hs in a row since that segment of your selected strings is the same. After two Hs in a row appear, however, each of you wins with probability $\frac{1}{2}$.''
	
``I see,'' Alice replies. ``So, as you have elucidated, despite our selection here having drastically different wait times, our probabilities of winning are, in fact, equal.''
	
``Correct,'' we reply. ``Yet another example where the string with a longer expected wait time has an advantage is where you, Alice, select the string THTH, which has an expected wait time of 20 flips, and play against Bob's choice of HTHH, which has an expected wait time of 18 flips. This case lies in your favor as your probability of winning is $\frac{9}{14}$, although your string has the higher expected wait time.''

The formula for winning probabilities in Penney's game is well-known. We present it in Section~\ref{sec:sometheory}. Table~\ref{table:PABobProb} displays the probability of Bob winning in the game with strings of length 3. Alice's possible choices fill the top row and Bob's possible choices are in the left column. We only show Bob's winning probabilities for Alice's potential strings that begin with H, as we can calculate the rest by symmetry.

\begin{table}[h!]
\begin{center}
\begin{tabular}{ |c|c|c|c|c| } 
  \hline
     & HHH & HHT & HTH & HTT \\
  \hline
 HHH& 	*  & 	1/2  &	2/5   & 2/5 \\
  \hline
 HHT& 	1/2  & 	*  & 	2/3  & 	2/3	  \\
  \hline
 HTH& 	3/5 & 	1/3  &  * &  	1/2 \\
  \hline
 HTT& 	3/5   &	1/3   & 1/2  & 	*  \\
  \hline
 THH& 	7/8 &  	3/4 &	1/2   &	1/2  \\
  \hline
 THT& 	7/12  & 3/8  & 	1/2  & 	1/2 \\
  \hline
 TTH& 	7/10  & 1/2 & 	5/8  &  1/4  \\
  \hline
 TTT& 	1/2  &  3/10 & 	5/12  & 1/8 \\
  \hline
\end{tabular}
\end{center}
\caption{Bob's probability of winning.}
\label{table:PABobProb}
\end{table}

Table~\ref{table:PABobBestChoice} below displays the best choice for Bob given Alice's string and the odds of Bob winning for that choice. Once again, we only need to consider Alice's strings which start with H.

\begin{table}[h!]
\begin{center}
\begin{tabular}{ |c|c|c|} 
  \hline
   Alice's string  & Bob's best choice & Bob's odds \\
  \hline
 HHH& 	THH  & 	7 to 1 \\
  \hline
 HHT& 	THH  & 	3 to 1	  \\
  \hline
 HTH& 	HHT & 	2 to 1  \\
  \hline
 HTT& 	HHT   &	2 to 1   \\
	\hline
\end{tabular}
\end{center}
\caption{Bob's best choice and odds.}
\label{table:PABobBestChoice}
\end{table}

Figure~\ref{fig:Penney} shows Bob's best choice. The arrow points from Alice's string to Bob's best choice.

\begin{figure}[h!]
\centering
\includegraphics[scale=0.5]{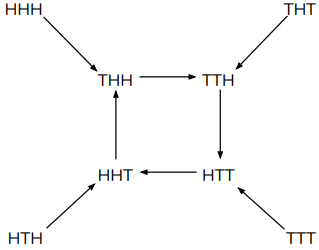}
\caption{Bob's best choice given Alice's string.}
\label{fig:Penney}
\end{figure}

Interestingly, there is an algorithm Bob can employ to choose his string once provided Alice's string such that he has the optimal probability of winning in Penney's game. Namely, given Alice's string of Hs and Ts, Bob takes the second letter from Alice's string, reverses it (turns it to H if it's T and vice versa), and adds the first $n-1$ elements of Alice's string to it (see \cite{GO}). From paper \cite{F}, we know that this optimal string is unique.

To demonstrate this with an example, suppose Alice chooses the string HTT. According to the algorithm described above, Bob chooses HHT, which creates a win ratio of 2:1 against Alice. If Alice chooses HHT this time, then Bob selects THH to stand victorious with the odds of 3:1. Hence, one may conclude that THH is a better choice than both HTT and HHT. However, this conclusion is incorrect; THH, for all practical purposes, is the same as HTT. Thus, we attain a non-transitive cycle: HHT beats HTT, which beats TTH, which beats THH, which beats HHT.
	
Since there is no best string that Alice can select, this game gives Alice a major disadvantage. Given Alice's string, Bob may construct his own such that he invariably bears the higher likelihood of victory. Now, with Alice and Bob, we seek to construct a coin game that, once and for all, gives the two equitable odds.
	
Here is the list of games we invent and discuss in this paper:

\begin{enumerate}
	\item \textbf{The Head-Start Penney's game. Head-start} We give Alice an edge by giving her a head-start of one flip. We discuss this coin game in detail in Section~\ref{sec:eff}. Bob's expected wait time increases by 1 and Alice's chances of winning are improved. However, Bob has the advantage and, given her string, is capable of selecting his own such that his probability of winning is better than a half. Interestingly, given Alice's string, Bob's optimal string in this game is the same as Bob's optimal string in Penney's game.
	\item \textbf{The Post-a-Bobalyptic Penney's game.} We discuss this game extensively in Section~\ref{sec:efl}. In this variant of Penney's game, Alice is allowed an additional toss at the end if Bob's string appears before hers does. If her string appears with this extra toss, it is counted as a tie. Otherwise, Bob wins. For every choice Alice may make, her worst-case odds are better than in Penney's game. Additionally, if Alice picks a good string, this game is fair for $n=3$. 
	\item \textbf{The Second-Occurrence Penney's game.}  We explore and discuss this coin game in Section~\ref{sec:so}. Here, the first player whose string occurs twice wins. This game does improve the odds in favor of Alice, but with a few exceptions. 
	\item \textbf{The Two-Coin game.} In this game, which we analyze in Section~\ref{sec:tc}, Alice and Bob flip separate coins. Whoever sees their string appear first in their own series of flips wins. The Two-Coin game differs from other games in that the winning probabilities only depend on expected wait time as the two strings cannot interact with one another. This game is fair for both players.
	\item \textbf{The No-Flippancy game.} We discuss this game in detail in Section~\ref{sec:nf}. This game, being deterministic, is rather distinct from the others here, as it does not involve a coin. Alice and Bob decide on their turn what the result of a coin flip is according to simple rules. Alice has the first turn, and they alternate turns afterwards. The nature of this variant is such that the outcome of each game is always either a win, a loss, or an infinite game. Unfortunately, it is not enough for her to have a fair chance in this game. Interestingly, for $n=3$, Bob can follow the algorithm to obtain an optimal string in Penney's game to pick a victorious string in the No-Flippancy game. As this game has numerous highly intriguing properties, we have written an in-depth paper about it \cite{NoFlippancy}.
	\item \textbf{The Blended game.} This game is a friendly mixture of the No-Flippancy and Penney's game. We cover it in Section~\ref{sec:c}. When Alice's and Bob's wanted outcomes coincide, that is the outcome they receive, similar to the No-Flippancy Game. If not, they flip a coin. Once again, whoever's string appears first is triumphant. One interesting feature of this game is that it also has a non-transitive cycle, which is of length 6.
\end{enumerate}

It is worth noting that, in Penney's game, the odds of a string $S_1$ winning against a string $S_2$ are independent of which player selects which string. This is due to the symmetry present in the game with respect to Alice and Bob.
	
However, this property is not true in three of the game variants listed above: the Head-Start Penney's game, the Post-a-Bobalyptic Penney's game, and the No-Flippancy game. Nevertheless, the remaining three games --- the Second-Occurrence game, Two-Coins game, and Blended game --- retain this characteristic.

Interestingly, the Second-Occurrence game has a non-transitive cycle of strings congruent to that of Penney's game in Figure \ref{fig:Penney}. The Blended game has a non-transitive cycle of length six, which furthermore surprised us.

We cover some general theory in Section~\ref{sec:sometheory}. Specifically, we discuss a way to measure how self-overlapping a string is, and we use this knowledge to explain the formula for the odds for Penney's game. Additionally, we mention the probability that a string appears for the first time in a given number of flips.

Below are conventions we utilize throughout the paper.

\begin{itemize}
\item We assume that Alice's and Bob's string are different, as most games would end in a tie otherwise.  
\item Unless otherwise noted, we assume that Alice's string starts with H, as the results for strings starting with T can be extended by symmetry.
\item Alice and Bob choose strings of the same lengths. We denote this common length as $n$.
\item In all the tables with winning probabilities, unless otherwise noted, the top row shows Alice's choices, the left column shows Bob's choices, and we show Bob's probabilities of winning. 
\end{itemize}

Alice and Bob commence their journey with the simplest variation of Penney's game: the Head-Start Penney's game.

\section{The Head-Start Penney's game}\label{sec:eff}

Alice and Bob, deeply unsatisfied by the unfairness inherent to Penney's Game, mutually concur to even the odds somewhat. Bob graciously agrees to play the Head-Start Penney's game instead. In this variation of Penney's game, the first toss counts only towards Alice's string. Just as with Penney's game, the two players cannot tie.
	
Before they play, the two companions wish to investigate how much, precisely, the game leverages the odds in Alice's favor.

``Is there a method by which we may derive how much my probability of winning has improved in comparison to the standard Penney's game?'' Alice inquires.
	
``The first toss aids you winning only if you're victorious by immediately receiving your string in the initial $n$ tosses. Otherwise, the first toss doesn't affect the game,'' Bob realizes.
	
Excitedly, Alice adds, ``I see! If I fail to win immediately, the game will proceed as Penney's game would. The probability of a win occurring for me right off the bat is $\frac{1}{2^{n}}$.''

With this new knowledge, they denote the probability of Alice winning in Penney's game, given her and Bob's strings, as $p$. They now know that if Alice does not receive a win immediately, which is the case with probability $1-\dfrac{1}{2^{n}}$, the game proceeds as Penney's game would. So, Alice's total winning chance with the same strings selected must be $\frac{1}{2^n} + (1-\frac{1}{2^n})p$. This can be rewritten as $p+(1-p)\frac{1}{2^{n}}$.
	
Therefore, Bob's probability is $1 - \frac{1}{2^n} - (1-\frac{1}{2^n})p = (1-p)(1-\frac{1}{2^n})$. His chance for a wonderful win in Penney's game is $1 - p$, so we can safely infer that his probability of victory in the Head-Start Penney's game is directly proportional to his probability of victory in Penney's game.

``The Head-Start Penny's game indeed provides me with better odds than Penney's game does. Notably, the larger $p$ is and the smaller $n$ is, the greater the increase in my probability of winning in this game is compared to Penney's game!'' Alice gleefully remarks.
		
``That, in turn, leads to a perhaps even more interesting statement,'' Bob comments. ``Given your string, my optimal string in Penney's game must also be the optimal string for me in this game, since the probabilities are directly proportional! Quite fascinating.''
		
``Precisely,'' we interject in agreement. ``Suppose, for example, $n=3$. Then, Alice, your chance of winning is $\frac{1+7p}{8}$, and Bob's chance of winning is $\frac{7(1-p)}{8}$ from what we established together.''
		
And thus, using the winning probabilities for Penney's game for $n=3$ from Table~\ref{table:PABobProb}, we attain Table~\ref{table:FTBobProb} below, which delineates Bob's probabilities of winning in the Head-Start Penney game.

\begin{table}[h!]
\begin{center}
    \begin{tabular}{|c|c|c|c|c|}
    \hline
    & HHH & HHT & HTH & HTT \\
    \hline
    HHH & * & 7/16 & 7/20 & 7/20 \\
    HHT & 7/16 & * & 7/12 & 7/12 \\
    HTH & 21/40 & 7/24 & * & 7/16 \\
    HTT & 21/40 & 7/24 & 7/16 & * \\
    THH & 49/64 & 21/32 & 7/16 & 7/16 \\
    THT & 49/96 & 21/64 & 7/16 & 7/16 \\
    TTH & 49/80 & 7/16 & 35/64 & 7/32 \\
    TTT & 7/16 & 21/80 & 35/96 & 7/64 \\
    \hline
    \end{tabular}
\end{center}
\caption{Bob's probabilities of winning.}
\label{table:FTBobProb}
\end{table}

Table~\ref{table:FTBobsodds} shows Bob's optimal string choice and odds of winning given Alice's string. As Alice mentioned previously, the optimal string for Bob is the same here as it is in Penney's game. This signifies that Figure~\ref{fig:Penney} represents this game as well. Nevertheless, the odds in the Head-Start Penney's game are certainly more favorable for Alice in comparison to Penney's game. Despite this, Bob still has an advantage because he can always pick a string with favorable odds.

\begin{table}[h!]
\begin{center}
    \begin{tabular}{|c|c|c|}
    \hline
    Alice's string & Bob's Best choice & Bob's odds \\
    \hline
    HHH & THH & 49 to 15 \\
    \hline
    HHT & THH & 21 to 11 \\
    \hline
    HTH & HHT & 7 to 5 \\
    \hline
    HTT & HHT & 7 to 5 \\
    \hline
    \end{tabular} 
\end{center}
\caption{Bob's best choice and odds.}
\label{table:FTBobsodds}
\end{table}

As Bob is delayed by a toss, the expected wait time for his string increases by 1 flip. In this game, as in Penney's game, the string bearing a longer wait time may still have a higher success rate. In Penney's game, this only occurs for strings where $n\geq 4$. In the Head-Start Penney's game, this can occur for $n=3$ as well. All such cases for strings of length 3 are displayed in Table~\ref{table:Wvwt}, where ``Winner'' denotes the player with the advantage.

\begin{table}[h!]
\begin{center}
    \begin{tabular}{ | c | c | c | c |}
    \hline  
    Winner & Alice & Bob & Wait Time \\
    \hline
    Alice & HTH & THH & (10, 9)\\
    Bob & HHT & THH & (8, 9) \\
    Alice & HTH & HTT & (10, 9) \\
     Bob & HTT & HHT & (8, 9) \\ 
    Alice & HHH & HHT & (14, 9) \\
    \hline
    \end{tabular}
\end{center}
\caption{Winning versus wait time.}
\label{table:Wvwt}
\end{table}

The last line in Table~\ref{table:Wvwt} particularly piques interest due to the incredibly disparate expected wait times contradicting the true winner. This phenomenon continues and magnifies for greater string lengths. If Alice selects a string comprised uniformly of Hs and Bob chooses a string congruent to Alice's except for the last term, Alice has the greater probability of winning despite her string's expected wait time being almost twice as large as Bob's string's expected wait time.
	
Alice and Bob fail to reach their goal with the Head-Start Penney's game, yet are gratified by the knowledge that allowing Alice an extra flip at the beginning did indeed help Alice. ``What if Alice is provided with an extra flip at the end of Penney's game instead?'' they wonder pensively.

\section{The Post-a-Bobalyptic Penney's game}\label{sec:efl}

This game is distinguished from Penney's game by the fact that, if she loses, Alice receives an extra toss at the very end. If this toss allows her string to appear, then it the outcome is a tie.
	
``Well,'' Bob begins, ``if I attain my string first in the sequence of tosses, then you get an extra flip. However, that additional flip can only be of use if the last $n-1$ terms of my string coincide with the first $n-1$ terms of your string.''
	
``True,'' Alice remarks. ``However, your algorithm for obtaining the optimal string in Penney's game creates this manner of a special pair of strings.''
	
As an example, suppose Alice selects the string HHH. To maximize his advantage in Penney's game, Bob would select THH by the best-odds algorithm. However, this gives Alice a chance to tie things up with the extra toss at the end. 

Bob nods in agreement. ``In this game, whenever I use this `optimal' sequence, there must be a $\frac{1}{2}$ probability that you will receive your string's last character and we tie. However, if I select some other string, my probability of victory must be equivalent to that in Penney's game.''

As Alice and Bob wish to model their probabilities in this novel game mathematically, we provide them with a basis. Suppose, for some set of strings chosen by Alice and Bob, Alice comes out the winner with the probability $p(A)$ in Penney's game and with the probability $p'(A)$ in the Post-a-Bobalyptic Penney's game. Correspondingly, suppose Bob is triumphant against Alice in Penney's game with the probability $p(B)$ and in the Post-a-Bobalyptic Penney's game with probability $p'(B)$. We furthermore denote by $p'(T)$ the probability that Alice and Bob obtain a tie in the Post-a-Bobalyptic Penney's game. We therefore have $p(A) + p(B) = 1$ and $p'(A) + p'(B) +p'(T) = 1$.
	
Alice and Bob conclude that if the first $n-1$ elements of Alice's string fail to match with the last $n-1$ elements of Bob's string, then 
\[p'(T) = 0 \quad p'(A) = p(A) \quad p'(B) = p(B).\]
Otherwise, 
\[p'(T) = \frac{p(B)}{2} \quad p'(A) = p(A) \quad p'(B) = \frac{p(B)}{2}.\]

This illustrates that, given Alice's string, if Bob chooses his string by performing the best-odds algorithm, then his odds of winning in the Post-a-Bobalyptic Penney's game are $\frac{p(B)}{2p(A)}$. As described, this is half his odds in Penney's game. Therefore, another string might be a better choice for Bob in the Post-a-Bobalyptic Penney's game. In any case, his odds are not reduced by more than twice.

Table~\ref{table:ETpoda} displays probabilities of different game outcomes for strings of length 3. The first term represents the probability of Bob winning, the second one is the probability of Alice winning, and the last one is the probability of a tie.

\begin{table}[h!]
\begin{center}
\begin{tabular}{ |c|c|c|c|c| } 
  \hline
     & HHH & HHT & HTH & HTT \\
  \hline
 HHH& *  & 1/4, 1/2, 1/4  &2/5, 3/5, 0  &  2/5, 3/5, 0 \\
  \hline
 HHT& 1/2, 1/2, 0  & *  & 1/3, 1/3, 1/3  & 1/3, 1/3, 1/3  \\
  \hline
 HTH&  3/5, 2/5, 0 & 1/3, 2/3, 0  &  * &  1/2, 1/2, 0 \\
  \hline
 HTT& 3/5, 2/5, 0   & 1/3, 2/3, 0   & 1/2, 1/2, 0  & *  \\
  \hline
 THH& 	7/16, 1/8, 7/16 &  3/8, 1/4, 3/8 & 1/2, 1/2, 0   & 1/2, 1/2, 0  \\
  \hline
 THT& 7/12, 5/12, 0  & 3/8, 5/8, 0  & 1/4, 1/2, 1/4  & 1/4, 1/2, 1/4 \\
  \hline
 TTH& 7/10, 3/10, 0  &  1/2, 1/2, 0 & 5/8, 3/8, 0  &  1/4, 3/4, 0  \\
  \hline
 TTT& 1/2, 1/2, 0  &  3/10, 7/10, 0 & 5/12, 7/12, 0  &  1/8, 7/8, 0 \\
  \hline
\end{tabular}
\end{center}
\caption{Probabilities of different outcomes for the Post-a-Bobalyptic Penney's game.}
\label{table:ETpoda}
\end{table}

Table~\ref{table:ETodds} shows the odds of Bob winning.

\begin{table}[h!]
\begin{center}
\begin{tabular}{ |c|c|c|c|c| } 
  \hline
     & HHH & HHT & HTH & HTT \\
  \hline
 HHH& *   & 1 to 2  & 2 to 3 &  2 to 3 \\
  \hline
 HHT& 1 to 1  & *  & 1 to 1  & 1 to 1  \\
  \hline
 HTH&  3 to 2 & 1 to 2 &  * &  1 to 1 \\
  \hline
 HTT& 3 to 2  & 1 to 2  & 1 to 1 & *  \\
  \hline
 THH& 	7 to  2&  3 to 2 & 1 to 1   & 1 to 1 \\
  \hline
 THT& 7 to 5  & 3 to 5  & 1 to 2  & 1 to 2 \\
  \hline
 TTH& 7 to 3  &  1 to 1 & 5 to 3  &  1 to 3  \\
  \hline
 TTT& 1 to 1  &  3 to 7 & 5 to 7  &  1 to 7 \\
  \hline
\end{tabular}
\end{center}
\caption{Bob's odds of victory in the Post-a-Bobalyptic Penney's game.}
\label{table:ETodds}
\end{table}

As seen in the table, this game is now a fair game for $n=3$. If Alice picks HTT for her string, at best, Bob can only tie with Alice. However, for any other string, Bob would still have an advantage.

Table~\ref{table:ETbest} shows the best choice for Bob and the odds of Bob's winning.

\begin{table}[H]
\begin{center}
\begin{tabular}{ |c|c|c|} 
  \hline
   Alice  & Bob & Odds \\
  \hline
 HHH& 	THH  & 	7 to 2 \\
  \hline
 HHT& 	THH  & 	3 to 2	  \\
  \hline
 HTH& 	TTH & 	5 to 3  \\
  \hline
 HTT& 	HHT, HTH, THH &	1 to 1   \\
  \hline
\end{tabular}
\end{center}
\caption{Bob's best choice and odds.}
\label{table:ETbest}
\end{table}

Figure~\ref{fig:LF} shows Bob's best choice. The arrow points from Alice's string to Bob's best choice.

\begin{figure}[H]
\centering
\includegraphics[scale=0.5]{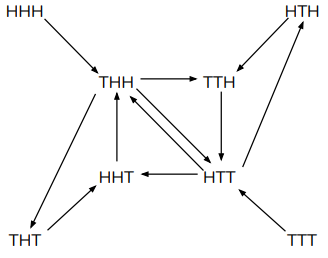}
\caption{Bob's best choice.}
\label{fig:LF}
\end{figure}

As is displayed in the preceding data tables, the optimal string choice for Bob in Penney's game remains the optimal choice in the Post-a-Bobalyptic Penney's game for $n=3$. However, in this game, it may share the mantle of `optimal string' with other strings.

We observed that in Penney's game, a string with a greater expected wait time may have an advantage. This is true in the Post-a-Bobalyptic Penney's game as well. We must have $n\geq 4$ for this to occur. Table~\ref{table:ETslwt} shows all such possibilities where $n=4$.

\begin{table}[H]
\begin{center}
    \begin{tabular}{ | c | c | c | c |}
    \hline  
    Winner & Alice & Bob & Wait Time \\
    \hline
    Alice & HTTH & TTHH & (18,16)\\
    \hline
    Alice & HTHT & THTT & (20,18)\\
    \hline
    Alice & HTHH & THHH & (18,16)\\
    \hline
    Bob & HHTT & HTHH & (16,18) \\ 
    \hline
    Alice & HTHH & HHTT & (18,16) \\
    \hline
    \end{tabular}
\end{center}
\caption{Pairs of strings for $n=4$ where the string with the longer wait time has higher winning probabilities.}
\label{table:ETslwt}
\end{table}

The penultimate line garners considerable interest. The Post-a-Bobalyptic game provides a greater advantage to Alice than Penney's game does in certain cases. However, Bob may win despite having a string with a longer expected wait time.
	
As such, Alice and Bob continue their quest for a firmly fair game.

\section{The Second-Occurrence Penney's game}\label{sec:so}

Alice and Bob have seen that adding a flip in Alice's favor has not quite completely worked out as intended. While there was certainly a slight improvement in her odds, the games thus far have failed to be fair. Instead, Alice suggests a new game --- the \textit{Second-Occurrence Penney's game}. Here, a player is declared the victor once their string appears the second time in the sequence of coin flips.

As an example, suppose Alice's string is HHH and Bob's string is HHT. The game output HHTHHHH is a win for Alice, as HHH appears twice whereas Bob's string only appears once.

As with Penney's game, there are no ties possible in the Second-Occurrence Penney's game. Additionally, if Alice and Bob choose complementary strings, then, by symmetry, they both win with an equal probability.

We define a \textit{self-overlapping string} as follows: one can fit two occurrences of a string of length $n$ into less than $2n$ tosses. These strings might have better odds in this game than in the original Penney's game. For example, HHH is an unfavorable string selection for Alice in Penney's game. However, in the Second-Occurrence Penney's game, Alice has a 1/2 chance of attaining the second occurrence of this string immediately after the first occurrence if the game output is HHHH, which improves her chances.

We used a computer to calculate the probabilities of different outcomes for $n=3$. The results are presented in Table~\ref{table:SOBwp}. As before, the numbers below are Bob's winning probabilities.

\begin{table}[h!]
\begin{center}
\begin{tabular}{|c|c|c|c|c|}
\hline
 & HHH & HHT & HTH & HTT\\
\hline
HHH & * & 1/2 & 56/125 & 56/125\\
\hline
HHT & 1/2 & * & 16/27 & 16/27\\
\hline
HTH & 69/125& 11/27& * & 1/2\\
\hline
HTT& 69/125& 11/27& 1/2&*\\
\hline
THH& 11/16& 3/4& 1/2 & 1/2\\
\hline
THT& 59/108 & 7/16 & 1/2& 1/2\\
\hline
TTH& 151/250 & 1/2& 9/16& 1/4\\
\hline
TTT& 1/2 & 99/250& 49/108& 5/16\\
\hline
\end{tabular}
\end{center}
\caption{Bob's winning probabilities in the Second-Occurrence Penney's game}
\label{table:SOBwp}
\end{table}

Table~\ref{table:SOBodds} displays Bob's optimal string choice for different string selections made by Alice and the corresponding odds. The best choice for Bob for $n=3$ is the same as in Penney's game and is represented in Figure~\ref{fig:Penney}.

\begin{table}[H]
\begin{center}
\begin{tabular}{ |c|c|c| } 
 \hline
 Alice's string & Bob's best choice & Bob's odds \\ 
 \hline
 HHH & THH & 11 to 5 \\ 
 \hline
 HHT & THH  & 3 to 1 \\ 
 \hline
 HTH & HHT & 16 to 11 \\
 \hline
 HTT & HHT & 16 to 11 \\
 \hline
\end{tabular}
\end{center}
\caption{Bob's best choice and odds.}
\label{table:SOBodds}
\end{table}

We now look at this table and discuss our observations.

\begin{enumerate}
\item If Alice and Bob select complementary strings, the probability of either winning is 1/2, as expected.

\item We assumed that the Second-Occurrence Penney's game is fairer for Alice than the original Penney's game is. The probabilities are not altered between Penney's game and this game in cases where both players have a winning probability of 1/2. This result is not surprising. The probability 1/2 is achieved for pairs of complementary strings and pairs of strings that differ only in the last character.

\item For the string pair HHT and THH (and, similarly, the complementary pair TTH and HTT), the probabilities did not change between this game and Penney's game. The probability theory is rather tricky here. We can show not only for the Second-Occurrence Penney's game but for any $k$th-Occurrence Penney's game as well that the probability for this pair of strings fails to change. Indeed, if we have a long string of tosses, HHT and THH must alternate. THH always appears between two occurrences of HHT and, likewise, one HHT always occurs between two occurrences of THH. Therefore, whoever obtains the first occurrence of their string wins the any $k$th-Occurrence Penney's game.

\item For the remaining pairs of strings, the probabilities are closer to 1/2 than in Penney's game.
\end{enumerate}

\section{Some theory}\label{sec:sometheory}

Penney's game has been extensively studied. The theory presented in this section is available in many places, including \cite{BCG,C,F,MGSA,MG,GO,HN,WP,RV}. The theory is built not only for strings of coin tosses with two outcomes but for words in any alphabet. We do not discuss general formulas, but rather assume that the size of our alphabet is 2.

\subsection{Autocorrelation polynomial and Conway's Leading Number}

The autocorrelation vector of a word $w = w_{1}\dots w_{n}$, where $w_i$ is the $i$-th character in the word, is $c=(c_{0},\dots ,c_{n-1})$, with $c_{i}$ being 1 if the prefix of length $n - i$ equals the suffix of length $n-i$, and with $c_{i}$ being 0 otherwise. That is, value $c_{i}$ indicates whether $ w_{i+1}\dots w_{n}=w_{1}\dots w_{n-i}$.

For example, the autocorrelation vector of HHH is $(1,1,1)$ since all the prefixes are equal to all the suffixes. The autocorrelation vector of THH is $(1,0,0)$ since no proper prefix is equal to a proper suffix. Finally, the autocorrelation vector of HHTTHH is $(1,0,0,0,1,1)$.

Note that $c_{0}$ is always equal to 1 since the prefix and the suffix of length $n$ are both equal to the word $w$. Similarly, $c_{n-1}$ is 1 if and only if the first and the last letters are the same. If $c_1 =1$, then all letters in the word are the same. It follows that if $c_1 =1$, then for any $i$ we have $c_i = 1$.

If $c_0$ is the only value that is 1, we call the word $w$ a \textit{non-self-overlapping word}.

The autocorrelation polynomial of $w$ is defined as $c_w(z)=c_{0}z^{0}+\dots +c_{n-1}z^{n-1}$. It is a polynomial of degree at most $n-1$.

For example, the autocorrelation polynomial of HHH is $1+z+z^{2}$ and the autocorrelation polynomial of THH is 1. Finally, the autocorrelation polynomial of HHTTHH is $1+z^{4}+z^{5}$.

The Conway Leading Number is the integer value of the correlation vector viewed as a string of zeros and ones and evaluated in binary. Thus, the Conway Leading Number for HHH is 7, the Conway Leading Number for THH is 4, and the Conway Leading Number for HHTTHH is 35. We denote the Conway leading number of a word $w$ as $C_w$.

We can express the Conway Leading Number through the autocorrelation polynomial:
\[C_w = 2^{n-1}c_w\left(\frac{1}{2}\right).\]

\subsection{Probabilities}

Suppose we have a string $A$ of length $n$. We call a string $S$ that ends with $A$ an \textit{A-first-timer} if the last $n$ characters of $S$ is the only occurrence of $A$ in $S$. We denote the number of $A$-first-timers of length $i$ as $a_i(A)$ for $i \geq 0$. We call the sequence $a_1(A)$, $a_2(A)$, $\ldots$ and so on an \textit{$A$-first timer sequence}. 

Let $p_i(A)$ denote the probability that we first see the string $A$ after the $i$-th toss of a coin. The probabilities $p_i(A)$ and integers $a_i(A)$ are connected:
\[p_i(A) = \frac{a_i(A)}{2^i}.\]

For example, we know that $a_i(HT) = i-1$. Thus, $p_i(HT) = \frac{i-1}{2^i}$.

The generating function for the  $w$-first-timer sequence, where word $w$ has length $k$, is known \cite{GO}:
\[G(z) = \frac {z^{k}}{z^{k}+(1-2z)c_w(z)}.\]

If we plug in $z = \frac{1}{2}$, we get 1, which is the probability that the string ever occurs.

This generating function allows us to derive the formula for the expected wait time.

\subsection{The expected wait time}

The expected wait time of string $A$ can be calculated as 
\[\sum_0^\infty ip_i = \sum_0^\infty ia_i(A) \frac{1}{2^i}.\]

Given that $G(z)$ above is the generating function for the sequence $a_i(A)$, the generating function for the sequence $ia_i(A)$ is $zG'$, which equals
\[\frac {z(kz^{k-1}(z^{k}+(1-2z)c(z)) - z^k(kz^{k-1} -2c(z) + (1-2z)c'(z)))}{(z^{k}+(1-2z)c(z))^2}.\]

Minor manipulation produces
\[\frac {kz^{k}((1-2z)c(z)) + 2z^{k+1}c(z) - z^{k+1}(1-2z)c'(z)))}{(z^{k}+(1-2z)c(z))^2}.\]

When we plug in $z = \frac{1}{2}$, we get
\[\frac {2(\frac{1}{2})^{k+1}c(z)}{(\frac{1}{2})^{2k}} = 2^kc(z).\]

We can see that the expected wait time for a string is its Conway leading number times 2, which is a well-known fact. 

Here are examples of the wait time for strings of length 3: HHH --- 14; HTH --- 10; HHT and HTT --- 8.

We can see some patterns: If all letters are the same, then the wait time for a string of length $n$ is $2^{n+1} - 2$. This is the longest wait time for strings of length $n$. Strings that are reverses of each other have the same wait time as they have the same autocorrelation polynomial. The shortest wait time is $2^n$, which is provided by non-self-overlapping strings. We see that the wait time for some strings is almost twice the wait time for others.

\subsection{Examples}

Table~\ref{table:smallstrings} shows the examples of the first-timer sequences for strings of length up to 3 and some especially interesting examples for strings of length 4.

We group strings with the same autocorrelation polynomial, that is, strings with the same Conway Leading Number (CLN), into one line.

\begin{table}[h!]
\begin{center}
\begin{tabular}{ |c|c|c| } 
 \hline
 String & CLN &  Sequence\\ 
\hline
  H,T & 1& 1,1,1,1,1,1,1,1,1\\
\hline
HH, TT & 11 & 0,1,1,2,3,5,8,13,21 \\
\hline
HT, TH & 10 & 0,1,2,3,4,5,6,7,8\\
\hline
 HHH, TTT & 111 & 0,0,1,1,2,4,7,13,24 \\ 
\hline
 HHT, HTT, THH, TTH & 100 & 0,0,1,2,4,7,12,20,33\\ 
 \hline
HTH, THT & 101 & 0,0,1,2,3,5,9,16,28\\
\hline
HHHH, TTTT	& 1111 & 0,0,0,1,1,2,4,8,15\\
\hline
HHHT,HHT,HTTT,TTTH,TTHH,THHH& 1000 & 0,0,0,1,2,4,8,15,28\\
\hline
\end{tabular}
\end{center}
\caption{First-timer sequences for some strings of length up to 4.}
\label{table:smallstrings}
\end{table}

Table~\ref{table:smallstringsOEIS} shows, given a Conway Leading Number, the sequence number in the OEIS and its linear recurrence. The sequence in the OEIS is often shifted. We also provide the name of the sequence if it is short.

\begin{table}[h!]
\begin{center}
\begin{tabular}{ |c|c|c|c| } 
 \hline
 CLN & OEIS & Name & Recurrence \\ 
\hline
 1 & A000012 & Only 1s& $a_{n} = a_{n-1}$\\ 
\hline
 11 & A000045 & Fibonacci Numbers& $a_{n} = a_{n-1} + a_{n-2}$\\ 
 \hline
10 & A001477& Whole numbers& $a_{n} = 2a_{n-1} - a_{n-2} $\\
\hline
111 & A000073& Tribonacci Numbers&  $a_{n} = a_{n-1} + a_{n-2} + a_{n-3}$\\
\hline
100 & A000071& Fibonacci Numbers $-1$ & $a_{n} = 2a_{n-1}  a_{n-3}$\\
\hline
101 & A005314& (No short description) & $a_{n} = a_{n-1} + a_{n-2} + a_{n-4}$\\
\hline
1111 &A000078 & Tetranacci numbers & $a_{n} = a_{n-1} + a_{n-2}+ a_{n-3} + a_{n-4}$\\
\hline
1000 & A008937& Partial sums of Tribonacci numbers&$a_{n} = 2a_{n-1} - a_{n-4}$\\
\hline
\end{tabular}
\end{center}
\caption{First-timer sequences for small strings in the OEIS.}
\label{table:smallstringsOEIS}
\end{table}

\textbf{Example HHT.}  Consider the string HHT of length 3. The corresponding first-timer sequence is:
\[0,\ 0,\ 1,\ 2,\ 4,\ 7,\ 12, \ldots,\]
with the recurrence $a_i = 2a_{i-1} - a_{i-3}$ for $i > 2$.

This is sequence A000071 in the OEIS. We see that $a_i(HHT) = F_{i+1} - 1$, where $F_i$ is the $i$-th Fibonacci number. This is also sequence of partial sums of the Fibonacci sequence shifted one place to the right. One of the descriptions of this sequence in the OEIS says that this sequence is the number of 001-avoiding binary words of length $i - 3$. This is exactly our sequence. Indeed, removing HHT from the end of HHT-first-timers makes a bijection between our strings and strings avoiding HHT. The latter is the same as the number of binary strings avoiding 001. The generating function for the HHT-first-timer sequence is 
\[\frac{x^2}{(1-x-x^2)(1-x)}.\]

\textbf{Strings with one letter.}  Consider strings that consist of the same letter. In our table, they correspond to the following sequences: Only 1s, Fibonacci numbers, Tribonacci numbers, and Tetranacci numbers. They form a clear pattern. For strings of length $n$, each next term of the sequence is the sum of $n$ previous terms. The sequence of only 1s belongs to this pattern. One might call it the ``mononacci'' numbers.

\textbf{Non-self-overlapping strings.}  Consider non-self-overlapping strings, that is, strings with Conway Leading Numbers that are powers of two. In our table, they correspond to the following sequences:

\begin{itemize}
\item Only 1s 
\item Whole numbers
\item Fibonacci numbers $-1$ 
\item Partial sums of Tribonacci numbers
\end{itemize}

At first glance, there is no clear pattern. Then, one might remember the following famous identity for the Fibonacci numbers:
\[\sum_{i=0}^n F_i = F_{n+2}-1.\]
That is, the Fibonacci sequence is the sequence of partial sums of the Fibonacci numbers shifted by two terms. Now we can see a pattern. The sequence of whole numbers is also the sequence of partial sums of the sequence of all ones (our mononacci sequence). For a non-self-overlapping string of length $n$, the sequence is the partial sums of the $n-1$-nacci numbers.

\subsection{Game results}

To give the formula for winning probabilities, we need to extend the notion of Conway Leading Numbers. Conway Leading Numbers quantify the overlaps of a word with itself. However, we can also quantify the overlap of a word with another word. We only define the correlation of two words of the same length $n$, but it also can be defined for words of different lengths.

The correlation vector of two words $w_1$ and $w_2$ is $c_{w_1,w_2}=(c_{0},\dots ,c_{n-1})$, with $c_{i}$ being 1 if the prefix of length $n - i$ of $w_2$ equals the suffix of length $n-i$ of $w_1$, and with $c_{i}$ being 0 otherwise.

The correlation vector defines the correlation polynomial, the same way as it is defined by one word. It also defines the Conway Leading Number for two words, which we denote as $C_{w_1,w_2}$.

If $A$ is Alice's string and $B$ is Bob's string, then the formula for Bob's odds in Penney's game is:
\[\frac{C_{A,A}-C_{A,B}}{C_{B,B}-C_{B,A}}.\]

\textbf{Example HHT and THT.} For $i=0$, we see that HHT is different from THT, so $c_0 = 0$. For $i=1$, we see that HT is different from TH, meaning that $c_1 = 0$. For $i=2$ we see that T is the same as T, so $c_2 = 1$. Therefore, $c_{HHT,THT} = (0,0,1)$ and $C_{HHT,THT} = 001_2 = 1$. Similarly, we get that $C_{HHT,HHT}=4$, $C_{THT,THT} = 5$, and $C_{THT,HHT} = 0$. This means that Bob's comparative odds are $\frac{4-1}{5-0} = \frac{3}{5}$ and his probability of winning is $\frac{3}{8}$.

Based on this formula, the strategy for Bob to pick the best string when Alice's string is given is the following \cite{F}. Bob takes the second character in Alice's string and flips it. Then, he appends to it Alice's prefix of size $n-1$. For example, if Alice's string is HHT, Bob flips the second character H and uses T as his first character. Then he appends HH. Thus Bob's best choice is THH. 

We are now ready to describe a new game that might be considered a fair game.

\section{The Two-Coin game}\label{sec:tc}

Alice and Bob, quite dismayed by their repeated failure to attain a truly fair game, decide that a single coin will not cut it; they unveil their second and last coin, furiously brainstorming to come up with a game necessitating the involvement of two coins.

At last, their conjoined brainpower culminates to the simplistic yet elegant Two-Coin game. The two players toss their coins side-by-side, thus simultaneously developing separate sequences of flip outcomes as opposed to the singular sequence in Penney's game. The first person to attain their string through the sequence of flips that they generate is the winner. In this game, a tie is possible when both players simultaneously get their string while flipping their own coins.

However, their effort has resulted in a profound revelation: this game may, for once, truly be fair. We show that the winning probabilities for Alice and Bob are solely dependent on the expected wait time rather than the interaction between the strings. 

\begin{proposition}
The probabilities of a win/loss/tie depend only on the expected wait times of Alice's and Bob's strings.
\end{proposition}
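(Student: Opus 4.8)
The plan is to exploit the one feature that distinguishes this game from Penney's game: the two coins are independent, so the game decouples into two separate first-passage problems that never interact. Let $T_A$ denote the (random) number of tosses until Alice's string first appears in \emph{her own} sequence, and let $T_B$ be the analogous quantity for Bob. Because Alice and Bob flip distinct coins, $T_A$ and $T_B$ are independent random variables. The three outcome probabilities are the probability of Alice winning, $\Pr[T_A < T_B]$; the probability of Bob winning, $\Pr[T_B < T_A]$; and the probability of a tie, $\Pr[T_A = T_B]$. By independence, the joint law of $(T_A,T_B)$ is the product of the two marginals, so each of these three probabilities is a function of the marginal distributions of $T_A$ and $T_B$ alone. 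No overlap or correlation between the two strings can enter, in sharp contrast to the Penney's-game formula involving $C_{A,B}$ and $C_{B,A}$.

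The crux is then to show that the marginal distribution of $T_A$ is pinned down by a single number, namely the expected wait time of $A$. First I would invoke the results of Section~\ref{sec:sometheory}: the law of $T_A$ is encoded in the first-timer generating function $G(z) = z^{n}/\bigl(z^{n}+(1-2z)c_A(z)\bigr)$, since $p_i(A) = a_i(A)/2^i$ and $G$ enumerates the $a_i(A)$. This generating function depends on $A$ only through its length $n$ and its autocorrelation polynomial $c_A(z)$. Hence any two strings sharing the same length and the same autocorrelation polynomial have identical first-timer distributions.

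Next I would argue that the expected wait time already carries exactly this much information. The expected wait time of $A$ equals $2C_A$, twice the Conway leading number, and $C_A$ is by definition the integer whose binary digits are the autocorrelation vector $(c_0,\dots,c_{n-1})$. Since $c_0 = 1$ always, a vector of length $n$ yields a Conway leading number lying in the range $[\,2^{n-1},\,2^{n}-1\,]$; these ranges are disjoint across lengths, and within a fixed length the binary reading is injective. Thus the map from autocorrelation vectors to Conway leading numbers is a bijection, and $C_A$ recovers both the length $n$ and the full vector $(c_0,\dots,c_{n-1})$, hence $c_A(z)$, hence the entire distribution of $T_A$. Consequently $E[T_A] = 2C_A$ determines the law of $T_A$, and likewise $E[T_B]$ determines the law of $T_B$.

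Combining the two steps, the joint law of $(T_A,T_B)$ — and therefore the win, loss, and tie probabilities — is a function of the pair $(E[T_A], E[T_B])$ alone, which is exactly the assertion. The main obstacle is the middle claim, that the expected wait time is as informative as the whole autocorrelation polynomial; this is not obvious a priori and rests entirely on the bijection between Conway leading numbers and autocorrelation vectors. Once that bijection is established, independence of the coins does all the remaining work and the argument is purely formal.
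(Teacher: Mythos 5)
Your proposal is correct and follows essentially the same route as the paper's proof: independence of the two coins decouples the game into two non-interacting first-passage problems, whose distributions depend only on each string's autocorrelation polynomial, which in turn corresponds to the expected wait time via the Conway leading number. The one place you go beyond the paper is in explicitly verifying that the expected wait time recovers the full autocorrelation vector (using $c_0=1$ and the disjointness of the ranges $[2^{n-1}, 2^n-1]$ across lengths), an injectivity step the paper's proof relies on but leaves implicit.
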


\begin{proof}
Alice's string is $A$ and Bob's string is $B$. Suppose that $p_i$ (correspondingly $q_i$) are the probabilities that Alice (correspondingly Bob) sees her (his) string for the first time at the $i$-th toss. These probabilities allow us to calculate the winning, losing, and tying probabilities for Alice.

Alice wins with probability 
\[\sum_{i=1}^{\infty}p_i(1 - \sum_{k=1}^i q_i).\]

Alice loses with probability
\[\sum_{i=1}^{\infty}q_i(1 - \sum_{k=1}^i p_i).\]

There is a tie with probability
\[\sum_{i=1}^{\infty}p_iq_i.\]

As the sequences $p_i$ and $q_i$ depend only on the autocorrelation of $A$ and $B$ respectively, it follows that the winning probabilities depend only on the autocorrelation of each sequence. In other words, they only depend on the Conway Leading Numbers, which are half the expected wait time.
\end{proof}

Note that these probabilities sum up to 1. That is, 
\[\sum_{i=1}^{\infty}p_i = \sum_{i=1}^{\infty}q_i = 1.\]
It follows that the probability of a win, loss, or a tie sum up to 1, or, equivalently, the probability of an infinite game is 0.

\textbf{Example.} Suppose Alice and Bob choose HT. As we showed before: $p_i(HT) = \frac{i-1}{2^i}$. Thus, the probability of a tie is 
\[\sum_{i=1}^{\infty}\frac{(i-1)^2}{4^i} = \sum_{i=0}^{\infty}\frac{1}{4}\cdot \frac{i^2}{4^{i}} .\]

The generating function of the squares sequence is $\frac{x(1 + x)}{(1 - x)^3}$. Plugging in $x=\frac{1}{4}$ and dividing by 4, we get $\frac{5}{27}$. That is, if both Alice and Bob choose HT or TH, then a tie occurs with probability $\frac{5}{27}$ and each of them wins with probability $\frac{11}{27}$.

We wrote a program to calculate probabilities for strings of length 3 that are now in Table~\ref{table:TCpoda}. The top row represents Alice's string's Conway Leading Number. The first column shows Conway's Leading Number for Bob's string. The numbers in each cell, from left to right, represent the probability of Bob winning, Alice winning, and a tie. 

\begin{table}[h!]
\renewcommand{\arraystretch}{2.5} 
\begin{center}
\begin{tabular}{ | c | c | c| c | } 
 \hline
   & 111 & 101 & 100 \\ 
 \hline
 111 & $\cfrac{435}{913}$,\ $\cfrac{435}{913}$,\ $\cfrac{43}{913}$ & $\cfrac{3289}{8691}$,\ $\cfrac{1643}{2897}$,\ $\cfrac{473}{8691}$  & $\cfrac{23327}{73057},\ \cfrac{45409}{73057},\ \cfrac{4321}{73057}$\\ 
 \hline
 101 & $\cfrac{1643}{2897},\ \cfrac{3289}{8691},\ \cfrac{473}{8691}$ & $\cfrac{487}{1045},\ \cfrac{487}{1045},\ \cfrac{71}{1045}$  & $\cfrac{22431}{55265},\ \cfrac{28673}{55265},\ \cfrac{4161}{55265}$ \\
 \hline
 100 & $\cfrac{45409}{73057},\ \cfrac{23327}{73057},\ \cfrac{4321}{73057}$ & $\cfrac{28673}{55265},\ \cfrac{22431}{55265},\ \cfrac{4161}{55265}$  & $\cfrac{377}{825},\ \cfrac{377}{825},\ \cfrac{71}{825}$\\ 
 \hline
\end{tabular}
\end{center}
\renewcommand{\arraystretch}{1} 
\caption{Probabilities of different outcomes.}
\label{table:TCpoda}
\end{table}

In Table~\ref{table:TCpodaApprox} we approximated the values from Table~\ref{table:TCpoda} to make it easier to compare.

\begin{table}[h!]
\begin{center}
\begin{tabular}{|c|c|c|c|}
\hline
& 111 & 101 & 100 \\\hline
111 & .48, .48, .047 & .38, .57, .055 & .32, .62, .059\\\hline
101 & .57, .38, .054 & .47, .47, .068 & .41, .52, .075\\\hline
100 & .62, .32, .059 & .52, .41, .075 & .46, .46, .086\\\hline
\end{tabular}
\end{center}
\caption{Approximate probabilities of different outcomes.}
\label{table:TCpodaApprox}
\end{table}

Figure~\ref{fig:TwoCoin} shows Bob's best choice. The arrow points from Alice's Conway Leading Number to Bob's best choice. 

\begin{figure}[h!]
\centering
\includegraphics[scale=0.6]{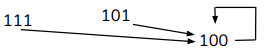}
\caption{Bob's best choice.}
\label{fig:TwoCoin}
\end{figure}

The results are less surprising for this game compared to Penney's game. The strings with shorter expected wait time give better winning odds. Thus, both Alice and Bob should choose the strings with Conway Leading Number 100: that is one of HHT, HTT, THH, or TTH.

We found our fair game. However, we also got tired of probabilities. So, we decided to invent a deterministic game.

\section{The No-Flippancy Game}\label{sec:nf}

Alice and Bob play their Two-Coin game from dusk to dawn, reveling in their discovery of a fair game. However, during an intermission in their game, Alice and Bob leave to complete some rudimentary chores. Bob hastily sweeps the two coins into his pocket before he departs so as to not forget them, blissfully unaware of the perfectly coin-sized hole at the bottom.

By the time Bob returns, the coins are nowhere to be found. Upon knowing this, Alice berates Bob about how his inattentiveness bungled the perfectly fair game.

Hoping to avoid Alice's scornful glare, Bob devises a coin game where they shall never need to flip a single coin --- thus designated the No-Flippancy game.

In this game, Alice chooses her string. Then Bob chooses his string of the same length. No one flips a coin. The outcome of the coin flip is decided deterministically. On their turn, each player looks at the current sequence of tosses and calculates the largest suffix, of length $i$, which matches the prefix of their string. Their choice is the character in position $i+1$ in their string.

We study the games where the length of Alice's and Bob's strings is 3.

Table~\ref{table:output} shows how the game proceeds. The top row shows Alice's choice and the left column shows Bob's choice. 

\begin{table}[H]
\begin{center}
\begin{tabular}{ |c|c|c|c|c| } 
  \hline
     & HHH 		&       HHT 	& HTH 		& HTT \\
  \hline
 HHH& 	HHH  		& 	HHT  	& HHTH	   	&  HHTHTHT...\\
  \hline
 HHT& 	HHH  		& 	HHT  	& HHT	  	& HHT		  \\
  \hline
 HTH& 	HTH		& 	HTH	& HTH  		& HTT	 \\
  \hline
 HTT& 	HTHTHT...   	& HTHTHT...   	& HTH  		& HTT	  \\
  \hline
 THH& 	HTHH 		&  HTHH	 	& HTH	   	& HTT	  \\
  \hline
 THT& 	HTHT  		& HTHT  	& 	HTH  	&  HTT	 \\
  \hline
 TTH& 	HTHTHT...  	& HTHTHT... 	& 	HTH  	&  HTT  \\
  \hline
 TTT& 	HTHTHT...  	& HTHTHT...  	& 	HTH  	& HTT \\
  \hline
\end{tabular}
\end{center}
\caption{The output of the game.}
\label{table:output}
\end{table}

Table~\ref{table:result} describes who wins and the number of turns in the game.

\begin{table}[h!]
\begin{center}
\begin{tabular}{ |c|c|c|c|c| } 
  \hline
     & HHH 		&       HHT 	& HTH 		& HTT \\
  \hline
 HHH& 	Tie, 3 		& 	Alice,3	& Alice, 4   	&  Tie, infinite \\
  \hline
 HHT& 	Alice, 3	& 	Tie, 3 	& Bob, 3  	& Bob, 3	  \\
  \hline
 HTH& 	Bob, 3		& 	Bob, 3	& Tie, 3	& Alice, 3 \\
  \hline
 HTT& 	Tie, infinite  	&Tie, infinite  & Alice, 3	& Tie, 3  \\
  \hline
 THH& 	Bob, 4 		&  Bob, 4 	& Alice, 3   	& Alice, 3  \\
  \hline
 THT& 	Bob, 4 		& Bob, 4 	& Alice, 3  	&  Alice, 3 \\
  \hline
 TTH& 	Tie, infinite 	& Tie, infinite	& Alice, 3  	&  Alice, 3  \\
  \hline
 TTT& 	Tie, infinite  	& Tie, infinite & Alice, 3  	& Alice, 3 \\
  \hline
\end{tabular}
\end{center}
\caption{The output of the game.}
\label{table:result}
\end{table}

We see that, in this game, if Alice chooses her string first, Bob can always choose his string so that he wins. Here is an amusing fact: given Alice's string, Bob can use the optimal strategy in Penney's game to pick his winning string.

It is surprising that, given that Alice starts first, she still loses. This game has many intriguing properties, which we discuss in a separate paper \cite{NoFlippancy}.

Now, as Alice loses, we need to help her again. We decided to add probabilities to this game.

\section{The Blended game}\label{sec:c}

This game is a variation of the No-Flippancy game, where we added chances back. Alice and Bob choose strings of the same length. As in the No-Flippancy game, they each want to increase their prefix in the output by 1. If they want the same outcome of a coin toss, they get it. If they want different outcomes, they flip a coin.

As usual, we study when Alice and Bob have strings of length 3. 

If Alice's first two characters are the same as Bob's, then the probability of each of them winning is 1/2. Indeed, they want the same first two characters, after which they flip a coin ending a game. If they have complementary strings, by symmetry, the probability of each of them winning is 1/2.

We used a computer to calculate the results for other cases. Table~\ref{table:CG3Bobwinprob} shows Bob's winning probabilities.

\begin{table}[h!]
\begin{center}
\begin{tabular}{|c|c|c|c|c|}
\hline
3 & HHH & HHT & HTH & HTT \\
\hline
HHH & * & 1/2 & 1/4 & 2/5 \\
HHT & 1/2 & * & 1/2 & 2/3 \\
HTH & 3/4 & 1/2 & * & 1/2 \\
HTT & 3/5 & 1/3 & 1/2 & * \\
THH & 7/8 & 3/4 & 1/4 & 1/2 \\
THT & 7/12 & 3/8 & 1/2 & 3/4 \\
TTH & 7/10 & 1/2 & 5/8 & 1/4 \\
TTT & 1/2 & 3/10 & 5/12 & 1/8 \\
\hline
\end{tabular}
\end{center}
\caption{Bob's winning probabilities.}
\label{table:CG3Bobwinprob}
\end{table}

We see that a lot of entries in this table are the same as in Penney's game. However, some differ. Suppose Alice picks HHH and Bob picks HTH. The first toss is H as both want H. After that Alice wins only if the next flip is H, followed by another flip that is H.  

Bob's best choice and odds are in Table~\ref{table:CG3Bobodds}.

\begin{table}[h!]
\begin{center}
\begin{tabular}{ |c|c|c| } 
 \hline
 Alice's string & Bob's best choice & odds \\ 
 \hline
 HHH & THH & 7 to 1 \\ 
 \hline
 HHT & THH  & 3 to 1 \\ 
 \hline
 HTH & TTH & 5 to 3 \\
 \hline
 HTT & THT & 3 to 1 \\
 \hline
\end{tabular}
\end{center}
\caption{Bob's best choice and odds.}
\label{table:CG3Bobodds}
\end{table}

Figure~\ref{fig:Coop} shows Bob's best choice. The arrow points from Alice's string to Bob's best choice.

\begin{figure}[h!]
\centering
\includegraphics[scale=0.5]{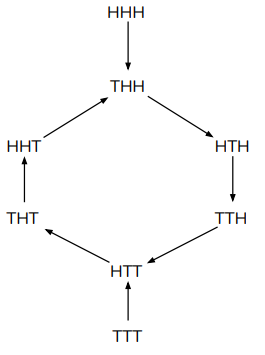}
\caption{Bob's best choice.}
\label{fig:Coop}
\end{figure}

The above graphic demonstrates something rather beautiful about this game: it has a non-transitive cycle of length 6. 
As in Penney's game, Bob always has better odds.

After playing these numerous coin games, Alice and Bob have had enough. They decide to give their exorbitantly lustrous coin, which, like all the others, is worth $10,000$ USD, to charity, and their discoveries to the world of coin-tastic mathematical research.

\section{Acknowledgements}

This project was done as part of MIT PRIMES STEP, a program that allows students in grades 6 through 9 to try research in mathematics. Tanya Khovanova is the mentor of this project. We are grateful to PRIMES STEP for this opportunity.

\end{document}